\def\F{\mathbb{F}}
\newtheorem{thm}{Theorem}
\newtheorem*{thm*}{Theorem}
\newtheorem*{claim*}{Claim}
\newtheorem*{dfn*}{Definition}
\newtheorem*{ntn*}{Notation}
\newtheorem{lem}[thm]{Lemma}
\newtheorem*{lem*}{Lemma}
\newtheorem{prop}[thm]{Proposition}
\newtheorem*{prop*}{Proposition}
\newtheorem{cor}[thm]{Corollary}
\newtheorem*{cor*}{Corollary}
\newtheorem*{conj*}{Conjecture}
\theoremstyle{remark}
\newtheorem*{rmk*}{Remark}
\newtheorem*{rmks*}{Remarks}
\newcommand{\squaregraph}[1]
{\tikzstyle{every node}=[circle, draw, fill=black!50, inner sep=0pt, minimum width=4pt]
\begin{tikzpicture}[thick,baseline=-4pt]
\node at(-0.2,0.2)(0){}; \node at(-0.2,-0.2)(1){};
\node at (0.2,0.2)(2){}; \node at (0.2,-0.2)(3){};
\draw { #1 };
\end{tikzpicture}}
\newcommand{\pentagraph}[1]
{\tikzstyle{every node}=[circle, draw, fill=black!50, inner sep=0pt, minimum width=3pt]
\begin{tikzpicture}[thick,baseline=-4pt]
\node at (-0.2,-0.25)(1){};
\node at (-0.2, 0.05)(2){};
\node at ( 0.0, 0.25)(3){};
\node at ( 0.2, 0.05)(4){};
\node at ( 0.2,-0.25)(5){};
\draw { #1 };
\end{tikzpicture}}
\title{A Note on the Inducibility of $4$-vertex Graphs}
\author{
Chaim Even-Zohar\thanks{
Department of Mathematics, Hebrew University, Jerusalem 91904, Israel. \newline
e-mail: \href{mailto:chaim.evenzohar@mail.huji.ac.il}{chaim.evenzohar@mail.huji.ac.il}~.}
\and
Nati Linial\thanks{
Department of Computer Science, Hebrew University, Jerusalem 91904, Israel.\newline
e-mail: \href{mailto:nati@cs.huji.ac.il}{nati@cs.huji.ac.il}~.
Supported by grants from the ERC and from the ISF.}
}
\begin{document}

\maketitle

\begin{abstract}

There is much recent interest in understanding the density at which
constant size graphs can appear in a very large graph.
Specifically, the inducibility of a graph $H$ is its extremal density,
as an induced subgraph of $G$, where $|G| \rightarrow \infty$.
Already for $4$-vertex graphs many questions are still open.
Thus, the inducibility of the $4$-path
was addressed in a construction of Exoo (1986), but remains unknown.
Refuting a conjecture of Erd\H{o}s,
Thomason (1997) constructed graphs with a small density of both $4$-cliques and $4$-anticliques.
In this note, we merge these two approaches and construct better graphs for both problems.
\end{abstract}

\section{Introduction}\label{introsect}

Let $H$ and $G$ be simple graphs.
Denote by $P(H,G)$ the proportion of $H$ in~$G$, i.e.,
the number of induced copies of $H$ in $G$, divided by {\small$\binom{|G|}{|H|}$}.
More generally, consider the \emph{local $t$-profile} of $G$,
the vector $\mathbf{P}_t(G) = \{P(H,G)\}_H$, where
$H$ runs over all isomorphism types of $t$-vertex graphs.

It is a major challenge to understand the limit points of $\mathbf{P}_t(G)$ as~$|G| \rightarrow \infty$, 
since in full generality this question includes large portions of extremal graph theory.
An important example is the study of graph inducibility, that was started by Pippenger and Golumbic~\cite{pippenger1975}.
To the best of our knowledge, the general concept of a local profile was essentially first considered in~\cite{erdos1979}.
More recently, the study of graph limits~\cite{lovasz2012} has brought to the fore the significance of local $t$-profiles of large graphs. 
The relevance of these concepts in the computational realm is illustrated by the study of graph property testing~\cite{goldreich1998}.
A key advance that enabled much of the recent progress in this area is Razborov's theory of flag algebras~\cite{razborov2007,razborov2013}.
Indeed, there is substantial recent activity in this domain~\cite{huang2014, huang2012, linial2014graphs},
and additional combinatorial structures with natural notions of local profile and inducibility are being investigated as well, 
e.g. tournaments~\cite{linial2014tournaments} trees~\cite{bubeck2013}, and permutations~\cite{wilf2002}.

The \emph{(maximal) inducibility} of a graph $H$ is
$$I(H) = \lim\limits_{n \rightarrow \infty} \max\limits_{|G|=n} P(H,G).$$
An averaging argument shows that the sequence decreasingly converges to a limit,
and the implicit error term is bounded by $|H|^2/|G|$ (\cite{pippenger1975}, Theorem $3$).
We briefly review a few facts and figures about inducibility.

The inducibility of some graphs is known precisely.
Clearly cliques and anticliques have inducibility $1$.
In general, by passing from $G$ to its complement $G^C$
one can easily obtain $I(H) = I(H^C)$.
The inducibility of complete partite graphs has been considered in the literature
\cite{pippenger1975,exoo1986,bollobas1986,brown1994,bollobas1995}. 
For example, the inducibility of the complete bipartite graph $K_{t,t}$, is $I(K_{t,t}) = \binom{2t}{t}/4^t \approx 1/\sqrt{\pi t}$,
as attained by larger bipartite graphs.

Every $t$-vertex graph $H$ has inducibility $\ge t!/(t^t-t) \approx \sqrt{2 \pi t} \exp(-t)$,
as shown by a nested blow-up of $H$
(\cite{pippenger1975}, see Section~\ref{constsect}).
Sometimes, one can exploit symmetries of $H$,
and adjust this construction to yield better lower bounds~\cite{siran1984}.
However, generically the bound $\exp(-t)$ is nearly tight.
Almost every~$H$ contains a set $S$ of about $3\log t$ vertices
that ``separates'' between the rest of the vertices.
Namely for every two vertices $x, y \in V(H) \setminus S$
there holds $\Gamma(x)\cap S\neq\Gamma(y)\cap S$, where $\Gamma(v)$ is $v$'s set of neighbors.
This implies that $I(H) \leq \exp( - t + O(\log^2 t))$.

Can we determine the exact inducibility of small graphs?
For $3$ vertices, we only need to consider $K_{1,2}$ or its complement.
Large bipartite graphs yield $I(K_{1,2}) \geq 3/4$,
which is optimal by the following classic result.
\begin{thm*}[Goodman~\cite{goodman1959}]
In a graph with $2n$ vertices, at least $2\binom{n}{3}$
triples form a triangle or an anti-triangle.
\end{thm*}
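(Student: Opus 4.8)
The plan is to count the complementary quantity, namely the number $m^{\ast}$ of triples that are \emph{not} monochromatic -- those spanning exactly one or exactly two edges of $G$. The starting point is a purely local observation: fix a triple $\{x,y,z\}$ and call one of its vertices \emph{mixed} if its two edges inside the triple have opposite types (one present, one absent). Running through the four possibilities for the number of present edges ($0,1,2,3$) shows that the triple has no mixed vertex exactly when it is a triangle or an anti-triangle, and it has exactly two mixed vertices otherwise; one or three mixed vertices never occurs.

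Hence, if I count pairs $(v,T)$ in which $T$ is a non-monochromatic triple containing $v$ as a mixed vertex, I obtain $2m^{\ast}$ on one side. On the other side, a mixed vertex at $v$ is specified by choosing one neighbor and one non-neighbor of $v$ among the remaining $2n-1$ vertices, so the count is $\sum_{v} d_v(2n-1-d_v)$, where $d_v = |\Gamma(v)|$. Therefore $m^{\ast} = \tfrac12\sum_v d_v(2n-1-d_v)$, and the number of monochromatic triples equals $\binom{2n}{3} - \tfrac12\sum_v d_v(2n-1-d_v)$. It now suffices to bound each summand from above.

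The quadratic $d\mapsto d(2n-1-d)$ attains its maximum over the integers at $d=n-1$ or $d=n$, so $d_v(2n-1-d_v)\le n(n-1)$; summing over the $2n$ vertices gives $m^{\ast}\le n^2(n-1)$. A short computation then shows that the number of monochromatic triples is at least $\binom{2n}{3} - n^2(n-1) = \tfrac13 n(n-1)(n-2) = 2\binom{n}{3}$, as claimed. I expect the one place that needs care to be exactly this integrality step: the hypothesis that $|G|$ is even makes $2n-1$ odd, so the correct integer maximum is $n(n-1)$ rather than $(2n-1)^2/4$, and it is precisely this gap that makes the final estimate sharp -- a careless bound by $(2n-1)^2/4$ would fall short of $2\binom{n}{3}$. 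Equality is achieved by the balanced complete bipartite graph $K_{n,n}$, whose only monochromatic triples are the $2\binom{n}{3}$ anti-triangles lying within the two color classes; this is the same construction that underlies $I(K_{1,2})\ge 3/4$.
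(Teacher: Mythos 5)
Your proof is correct, and it is the standard counting argument for Goodman's theorem (the paper only cites the result without proof): classifying triples by their number of ``mixed'' vertices, summing $d_v(2n-1-d_v)$ over vertices, and using that the integer maximum of this quadratic is $n(n-1)$ because $2n-1$ is odd. The final algebra $\binom{2n}{3}-n^2(n-1)=2\binom{n}{3}$ checks out, and your remark about $K_{n,n}$ attaining equality correctly ties back to the bound $I(K_{1,2})\geq 3/4$ mentioned in the text.
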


There are eleven isomorphism types of $4$-vertex graphs,
the inducibilities of ten of which are known, and summarized in Table~\ref{exootable},
an updated version of Exoo's~\cite{exoo1986}.
Some of these numbers follow from general results concerning complete bipartite
graphs~\cite{pippenger1975,bollobas1986, brown1994, bollobas1995}.
For others, various extremal constructions were found,
and their optimality was proved using flag algebra~\cite{hirst2011}.

\begin{table}[ht]
\centering\renewcommand{\arraystretch}{2}\begin{tabu}{l l l l c c l}
\tabucline[1pt]{-}\tabucline[1pt]{-}
$H$ & & $H^C$ & & $I(H)$ & & Extremal Construction \\
\tabucline[1pt]{-}\tabucline[1pt]{-}
$K_4$ &\squaregraph{(0)--(1)  (0)--(2)  (0)--(3)  (1)--(2)  (2)--(3)  (3)--(1)} &
$A_4$ &\squaregraph{} &
1 & & A complete graph \\
\hline
$S_4$ &\squaregraph{(0)--(1)  (0)--(2)  (0)--(3)} $\;\;\;\;$&
$T_4$ &\squaregraph{(1)--(2)  (2)--(3)  (3)--(1)} $\;\;\;\;$&
1/2 & & A complete bipartite graph \\
\hline
$C_4$ &\squaregraph{(0)--(1)  (1)--(3)  (3)--(2)  (2)--(0)} &
$M_4$ &\squaregraph{(1)--(2)  (3)--(0)} &
3/8 & & A complete bipartite graph \\
\hline
$V_4$ &\squaregraph{(3)--(1)  (3)--(2)  } &
$Q_4$ &\squaregraph{(1)--(2)  (2)--(0)  (0)--(1)  (3)--(0)} &
3/8 & & Two disjoint complete bipartite graphs \\
\hline
$D_4$ &\squaregraph{(0)--(1)  (0)--(2)  (0)--(3)  (2)--(3)  (3)--(1)} &
$E_4$ &\squaregraph{(1)--(2)  } &
72/125 & & A complete $5$-partite graph \\
\hline
$P_4$ &\squaregraph{(0)--(1)  (1)--(2)  (2)--(3)} &
& & ? & & Unknown \\
\tabucline[1pt]{-}\tabucline[1pt]{-}
\end{tabu}
\caption{Inducibilities of graphs on 4 vertices}
\label{exootable}
\end{table}

The case of $I(P_4)$ is intriguing.
Exoo's construction gives a lower bound of
$60/307 \approx 0.1954$ (See Section~\ref{constsect}).
We comment that~\cite{exoo1986} and~\cite{hirst2011} state the inaccurate bound
$960/4877 \approx 0.1968$, presumably by mistake.
Upper bounds on $I(P_4)$ were first given by Exoo~\cite{exoo1986},
then by Hirst~\cite{hirst2011}, and finally Vaughan~\cite{flagmatic}
set the current record at $0.204513$, using flag algebra calculus.
Here we present a new construction, which implies:

\begin{prop}\label{IP4}
$ I(P_4) \geq 1173/5824 \approx 0.2014$
\end{prop}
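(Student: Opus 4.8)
The plan is to prove the bound by exhibiting an explicit sequence of graphs $G_n$ with $\lim_{n\to\infty}P(P_4,G_n)=1173/5824$; since $I(P_4)=\lim_{n}\max_{|G|=n}P(P_4,G)$, this yields the statement. The graphs $G_n$ are produced by an iterated (nested) blow-up, in the spirit of the constructions recalled in Section~\ref{constsect}, but combining the feature that makes Exoo's construction good for $P_4$~\cite{exoo1986} with the feature that makes Thomason's construction work for the $K_4/A_4$ problem. Namely, we fix a \emph{weighted template}: a small graph $W$ on vertices $1,\dots,k$ with vertex weights $\alpha_1,\dots,\alpha_k>0$, $\sum_i\alpha_i=1$, and edge weights $w_{ij}\in[0,1]$ that are \emph{allowed to be strictly between $0$ and $1$} (the Thomason ingredient), while $W$ is chosen ``$C_5$-like'', so that many $4$-subsets of its vertices already induce a $P_4$ (the Exoo ingredient). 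Then $G_n$ is obtained by partitioning $[n]$ into blocks $B_1,\dots,B_k$ with $|B_i|\approx\alpha_i n$, putting a copy of $G_{|B_i|}$ on the vertex set $B_i$, and putting between two distinct blocks $B_i,B_j$ a quasirandom bipartite graph of density $w_{ij}$. (Equivalently, and more cleanly, one builds a single recursively defined graphon $W^{\infty}$ and samples $G_n$ from it, so that $P(F,G_n)$ converges to the $W^{\infty}$-density of $F$ for every fixed $F$; see~\cite{lovasz2012}. One may also allow finitely many mutually recursive templates, at the cost of enlarging the system below.) The entire improvement over Exoo lies in the choice of $W$, which we locate by optimising over the $\alpha_i$ and the $w_{ij}$.

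Write $p$, $e$, and $d_F$ (for $F$ a graph on three vertices, i.e.\ $F\in\{K_3,P_3,\overline{P_3},\overline{K_3}\}$) for the limits of $P(P_4,G_n)$, $P(K_2,G_n)$, and $P(F,G_n)$; these exist by construction. To get a recursion for $p$, pick four vertices of $G_n$ uniformly at random and condition on how they split among the blocks. Asymptotically this split is one of the five patterns $4$, $3{+}1$, $2{+}2$, $2{+}1{+}1$, $1{+}1{+}1{+}1$, each occurring with a probability that is an explicit polynomial in the $\alpha_i$. The pattern $4$ — all four vertices in one block — reproduces the construction one level down and contributes $\left(\sum_i\alpha_i^4\right)p$. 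The pattern $1{+}1{+}1{+}1$ contributes $24\sum_{S}\left(\prod_{i\in S}\alpha_i\right)q_S$, where $S$ ranges over $4$-subsets of $V(W)$ and $q_S$ is the probability that a random realisation of $W[S]$ is a $P_4$; this is the term that the ``$C_5$-likeness'' of $W$ makes large. Finally — and this is exactly the point of allowing weights in $(0,1)$ — the ``mixed'' patterns $3{+}1$, $2{+}2$, $2{+}1{+}1$, which contribute \emph{nothing} when all $w_{ij}\in\{0,1\}$ (a vertex then has the same adjacency to every vertex of a given block, forcing unwanted twins or degrees and ruling out $P_4$), now produce genuine induced $P_4$'s. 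Each such contribution is an explicit polynomial in the $w_{ij}$ and in $e$ and the $d_F$, obtained by a finite case analysis over which vertices of $P_4$ land in which block (using that within a block the induced type determines the isomorphism type but not the labels, so only the densities $d_F$ — together with fixed symmetry factors — enter, and using the conditional independence of the ``cross'' edges across a bipartition). The quantities $e$ and $d_F$ satisfy recursions of the same shape but strictly simpler: $e$ depends only on the $w_{ij}$ and on $e$, and the $d_F$ depend only on $e$, the $w_{ij}$ and the $d_F$. So one first solves for $e$, then for the $d_F$, then substitutes into the $P_4$ recursion, which collapses to $p=\left(\sum_i\alpha_i^4\right)p+C$ for an explicit rational constant $C=C(W)$, giving $p=C/\left(1-\sum_i\alpha_i^4\right)$.

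Substituting the optimised template $W$ then gives $p=1173/5824$, which proves Proposition~\ref{IP4}; one can also check that $W$ is a critical point of $p$ regarded as a function of the weights, which suggests — though we do not need it — that $1173/5824$ is the best bound obtainable within this family. I expect two steps to be the real work. The genuinely creative one is pinning down $W$: Exoo's construction uses a $C_5$-type (unweighted) template, which is rich in induced $4$-paths but cannot exploit the mixed patterns, while Thomason's weighted recursion exploits them but is not tuned to $P_4$; the content of the note is to find the weighted template that does both, and to optimise its $\alpha_i$ and $w_{ij}$ (which turn out rational, hence the rational answer). The laborious, error-prone one is the case analysis producing the polynomials for the $3{+}1$, $2{+}2$ and $2{+}1{+}1$ contributions: there one must be careful about conditional independence of cross-edges, about which vertex of $P_4$ may play which role in each pattern, and about the symmetry factors relating labelled embeddings to the unlabelled densities $d_F$ — a miscount anywhere there changes the final constant.
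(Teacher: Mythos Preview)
Your proposal has a genuine gap: you never exhibit the template $W$. The sentence ``Substituting the optimised template $W$ then gives $p=1173/5824$'' is precisely the content of the proposition; without naming $W$ and carrying out the substitution, nothing has been proved. You yourself flag ``pinning down $W$'' as the creative step, and then do not do it.

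There is also a structural mismatch between your framework and the paper's construction. The paper's graph sequence is
\[
K_4 \otimes (K_3\otimes K_3)^{\odot n},
\]
a tensor product of $K_4$ with the nested blow-up of the Paley graph of order~$9$; it is \emph{not} the nested blow-up of any single finite weighted template. Every edge density is $0$ or $1$, so the ``$w_{ij}\in(0,1)$'' that you call the Thomason ingredient plays no role. Thomason's actual ingredient is the tensor product $\otimes$ together with the observation that the spectral (Fourier) profile is multiplicative under $\otimes$ (Corollary~\ref{pointwise}); the fractional-weight/quasirandom picture is a misreading. Consequently your recursion $p=(\sum_i\alpha_i^4)\,p+C$, which is correct for a single-template nested blow-up, does not describe the paper's sequence: there the outer $K_4$ is applied once, not at every level of the recursion.

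The paper's computation is also organised quite differently from your case analysis over the split patterns $4,\;3{+}1,\;2{+}2,\;2{+}1{+}1,\;1{+}1{+}1{+}1$. It first finds the limiting $4$-profile of the inner nested blow-up $\mathbf{Q}_4(K_3\otimes K_3)$ as the Perron eigenvector of an explicit $11\times 11$ transition matrix $F_4(K_3\otimes K_3)$ (Lemma~\ref{bilinear} and~(\ref{lim})); it then passes to the spectral profile $\hat{q}$, multiplies pointwise by $\hat r(\,\cdot\,,K_4)$ via Corollary~\ref{pointwise}, and reads off $r(P_4,\cdot)$ from the linear form~(\ref{funcP4}). All the bookkeeping you anticipate for the mixed patterns is absorbed into the single matrix $F_4$ and the Fourier inversion.
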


We proceed with some additional definitions.
Inducibility naturally extends to formal linear combinations of graphs,
also known as \emph{quantum graphs}.
Namely, we interpret $P(H+ \nobreak H',G)$ as $P(H,G) + P(H',G)$,
and define the inducibility $I(H+H')$ accordingly.
The \emph{minimal inducibility} can be defined as the former,
$\max$ replaced by $\min$:
$$i(H) = \lim\limits_{n \rightarrow \infty} \min\limits_{|G|=n} P(H,G) .$$

By Ramsey's theorem a monochromatic $K_t$ exists
in every red/blue edge-coloring of a large enough $K_n$.
Goodman~\cite{goodman1959} and Erd\H{o}s~\cite{erdos1962}
asked how many monochromatic $K_t$-s such a coloring must contain.
The asymptotic nature of this quantity is
captured in the minimal inducibility $i(K_t+A_t)$, where $A_t = K_t^C$ is an anticlique on $t$ vertices.
By the above theorem of Goodman, $i(K_3+A_3) = 1/4$.
Interestingly, this is attained not only by bipartite graphs but also by random ones.
Using Ramsey numbers on the one hand and random graphs on the other,
Erd\H{o}s~\cite{erdos1962} showed
$$ \frac{1}{\binom{\binom{2t-2}{t-1}}{t}}
\;\leq\; i(K_t+A_t) \;\leq\; \frac{2}{2^{\binom{t}{2}}} \;,$$
and added that the upper bound seems likely to be tight.

This conjecture was refuted for all $t \geq 4$ by Thomason
(\cite{thomason1989}, see also~\cite{jagger1996,thomason1997}).
For $t= \nobreak 4$, Thomason showed that
$i(K_4+A_4)$ is \emph{strictly} smaller than $3769/124416 \approx 1/33.0103$.
His construction will be described below.
Another family of counterexamples for $4 \leq t \leq 8$ was started by
Franek and R\"{o}dl~\cite{franek1993,franek2002,deza2012,shen2012}
(see Section~\ref{discsect}).
Lower bounds on $i(K_4+A_4)$ were obtained by
several authors~\cite{giraud1979,evans1981,wolf2010,sperfeld2011,niess2012,flagmatic},
with Vaughan's $0.0294343 \approx 1/33.9739$ being the current record.
Here we slightly improve Thomason's bound:

\begin{prop}\label{iK4A4}
$ i(K_4+A_4) \leq 1411/46592 \approx 1/33.0205$
\end{prop}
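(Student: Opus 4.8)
The plan is to exhibit an explicit finite ``template'' graph (or more precisely a weighted blow-up pattern) and compute $P(K_4+A_4,\cdot)$ on the sequence of blow-ups it generates. Since we are proving an upper bound on $i(K_4+A_4)$, it suffices to produce one family $G_n$ with $|G_n|\to\infty$ and $P(K_4+A_4,G_n)\to 1411/46592$. Following Thomason's approach, I would start from a fixed graph $F$ on a small vertex set, assign to each vertex $v$ of $F$ a weight $x_v\ge 0$ with $\sum_v x_v=1$, and let $G_n$ be obtained by replacing $v$ with an independent set (or a clique, or recursively a scaled copy of the whole construction) of roughly $x_v n$ vertices, joining two blobs completely iff the corresponding edge is present in $F$. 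The density $P(K_4+A_4,G_n)$ then converges to a fixed rational function of the weights $x_v$, namely a sum over all multisets of four blobs of the probability that four vertices landing in those blobs span a $K_4$ or an $A_4$; within a single recursively-substituted blob one gets the same quantity back, which yields a self-consistent (fixed-point) equation.

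The concrete steps I would carry out: (1) specify the base graph $F$ and which blobs are ``clique-type,'' ``independent-type,'' or ``recursive-type'' — I expect $F$ to be a slight enlargement of Thomason's construction, perhaps adding one or two vertices or splitting a part, which is exactly the ``merging'' of the Exoo-style $P_4$ construction with Thomason's that the abstract advertises; (2) write down $p(x) := \lim_n P(K_4+A_4,G_n)$ as an explicit polynomial/rational expression in the weight vector $x$, being careful to count contributions where the four chosen vertices fall into $1,2,3$ or $4$ distinct blobs, and where a recursive blob contributes a factor of $p$ itself; (3) optimize $p(x)$ over the simplex $\sum x_v=1$, $x_v\ge 0$, using Lagrange multipliers, and check the stationarity conditions to pin down the optimal weights — these should come out rational; (4) substitute back and verify the value is $1411/46592\approx 1/33.0205$, and confirm this is strictly below Thomason's $3769/124416$.

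The main obstacle I anticipate is step (2)–(3): getting the counting polynomial exactly right. The bookkeeping of which $4$-subsets of blobs contribute — especially handling the self-similar/recursive blobs correctly so that the fixed-point equation $p=p(x;p)$ is set up without double counting or omission — is the delicate part, and a single miscounted configuration changes the optimum. A secondary subtlety is that the optimization over the simplex may have several critical points or boundary optima, so I would need to argue (or at least check numerically and then certify rationally) that the claimed weights give the genuine minimum of $p$ for that template. Finally, to be scrupulous I would note that this only establishes the inequality for the particular template; obtaining a \emph{strict} improvement, as in Thomason's statement, would require checking that the template is not degenerate (i.e.\ not secretly equivalent to a smaller one achieving the same value), though for the stated bound ``$\le$'' the strictness is not actually needed.
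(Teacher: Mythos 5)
You have correctly identified the logical shape of the task (exhibit an explicit sequence $G_n$ with $P(K_4+A_4,G_n)\to 1411/46592$), but the proposal stops at a plan and, more importantly, the plan's template family cannot express the construction that actually achieves this number. The paper's sequence is $M_4 \otimes K_4 \otimes (K_3\otimes K_3)^{\odot n}$: a \emph{tensor product} (adjacency is the XOR of adjacencies in the factors) of two fixed small graphs with a \emph{nested blow-up} of $K_3\otimes K_3$. Your template --- blobs joined completely or not at all according to a base graph $F$, with some blobs recursively substituted --- is a pure composition/substitution construction in the style of Exoo and Pippenger--Golumbic. A tensor product with $M_4\otimes K_4$ is not of this form: between two distinct ``blobs'' $(a,b)$ and $(a',b')$ the induced bipartite graph is either the adjacency pattern of the third factor or its bitwise complement, never complete or empty. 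So no choice of $F$, weights, and recursive blobs in your framework reproduces the paper's graph, and there is no evidence that the optimum of your $p(x)$ over any such template equals $1411/46592$. (Thomason's own graphs are Cayley graphs of $\F_2^{2t}$ defined by quadratic forms, i.e.\ inherently XOR-type; the ``merging'' advertised in the abstract is precisely tensoring Thomason's $M_4\otimes K_4$ with a nested blow-up, not enlarging a blow-up template.)

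The second gap is that all of the content of the proof lies in the computation you defer. The paper does not set up a weighted optimization at all (the construction is unweighted, so your Lagrange-multiplier step is not needed); instead it makes the count tractable with two tools you do not have: (i) the Fourier/spectral $4$-profile $\hat{r}(H,\cdot)$ is multiplicative under tensor products (Corollary~\ref{pointwise}), which reduces the contribution of $M_4\otimes K_4$ to a few explicit rational Fourier coefficients, and (ii) the limiting $4$-profile of the nested blow-up $(K_3\otimes K_3)^{\odot n}$ is the Perron--Frobenius eigenvector of an explicit $11\times 11$ stochastic matrix (Lemma~\ref{bilinear} and equation~(\ref{lim})), yielding $\hat{q}(K_4,K_3\otimes K_3)=18/91$, $\hat{q}(C_4,K_3\otimes K_3)=9/91$ and zeros elsewhere. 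Plugging these into the linear functional~(\ref{funcK4A4}) gives $1411/46592$. Without specifying the construction and without some equivalent of this machinery (your ``self-consistent fixed-point equation'' is the right instinct for the nesting, but it must be combined with the XOR structure, not with complete joins), the proposal cannot be completed as written.
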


\emph{Overview:}
Propositions~\ref{IP4} and~\ref{iK4A4} are proved by construction of graph sequences.
In the next section we survey several constructive tools,
which provide the framework for the subsequent presentation of the graphs.
Then we prove in Section~\ref{compsect}, that
these sequences indeed approach the densities prescribed in the propositions.
In Section~\ref{discsect} we discuss the inducibility of
longer paths, larger monochromatic cliques and all $5$-vertex graphs. 

\section{Construction}\label{constsect}

We start with a technical reduction. Let $\mathbf{P}_t(G)$ be the $t$-profile of a graph $G$.
The probability that $t$ random vertices induce a copy of $H$ is $P(H,G)$.
It is useful to define a similar number, $R(H,G)$,
the same probability when the $t$ vertices are sampled with replacements.
The vector $\mathbf{R}_t(G) = \{R(H,G)\}_H$ is called the \emph{repetitive $t$-profile} of $G$.
Note that $R(H,G)$ may depend on whether there is an edge from some vertex of $G$ to itself.
Therefore, mention of $R(H,G)$ will hereafter imply that $G$ allows loops.
Note also that $R(H,G)$ can replace $P(H,G)$ in the definition of inducibility,
because in large graphs repeated sampling of vertices gets rare.

Several extremal constructions for inducibility fall under the following simple definition.
Let $G$ be a graph, with or without loops.
A \emph{blow-up} of $G$ of order $m$, is a graph on the vertex set $V(G) \times \{1,...,m\}$,
with edges given by
$$ (g,h) \sim (g',h') \;\;\;\;\; \Leftrightarrow \;\;\;\;\; g \sim g' \;.$$
For example, a complete graph is a blow-up of a loop
and a complete bipartite graph is a blow-up of an edge.
The reader may verify that if $G'$ is a blow-up of $G$ then
$\mathbf{R}_t(G') = \mathbf{R}_t(G)$ for all $t$.
Thus a sequence of blow-ups of $G$ yields,

\begin{lem}\label{qlemma}
For every two graphs $G, H$
$$ i(H) \leq R(H,G) \leq I(H) \;.$$
\end{lem}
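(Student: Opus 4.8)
The idea is to pin down a single sequence of graphs of unbounded order on which the density of $H$ is exactly $R(H,G)$, and then read off both inequalities directly from the definitions of $I(H)$ and $i(H)$. Concretely, for each $m\ge 1$ let $G^{(m)}$ be the blow-up of $G$ of order $m$, so that $|V(G^{(m)})| = m\,|V(G)| \to \infty$. The argument then has three steps.

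First, I would invoke the observation recorded just before the statement: since $G^{(m)}$ is a blow-up of $G$, we have $\mathbf{R}_t(G^{(m)}) = \mathbf{R}_t(G)$, and in particular $R(H,G^{(m)}) = R(H,G)$ for every $m$. Second, I would compare $P$ with $R$ on these graphs. When $t$ vertices of $G^{(m)}$ are sampled without replacement, the probability that two of them land in the same block of the blow-up is $O(t^2/m)\to 0$; on the complementary event the adjacencies among the sampled vertices coincide with the adjacencies among the corresponding blocks, and the blocks themselves are distributed like an i.i.d.\ uniform $t$-sample from $V(G)$. Hence $P(H,G^{(m)}) = R(H,G^{(m)}) + o(1) = R(H,G) + o(1)$ as $m\to\infty$ — this is precisely the remark that $R$ may replace $P$ in the definition of inducibility because repeated sampling becomes rare, and it is here that any loops of $G$ are correctly accounted for (they affect $R(H,G)$ but are invisible to the loop-free count $P(H,G^{(m)})$, the two being reconciled by the vanishing chance of a repeated block).

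Third, I would conclude. Writing $n = m\,|V(G)|$,
$$\min_{|G'|=n} P(H,G') \;\le\; P(H,G^{(m)}) \;\le\; \max_{|G'|=n} P(H,G').$$
By the averaging argument recalled in the introduction (\cite{pippenger1975}), the left-hand and right-hand quantities converge, as $n\to\infty$, to $i(H)$ and $I(H)$ respectively; in particular they converge along the subsequence $n = m\,|V(G)|$. Letting $m\to\infty$ and using the second step yields $i(H)\le R(H,G)\le I(H)$.

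I do not expect a genuine obstacle: the lemma is essentially formal once the blow-up identity and the with/without-replacement comparison are in place. The only points that call for a little care are the second step — checking that the loop convention underlying $R(H,G)$ meshes with the loop-free subgraph count $P(H,G^{(m)})$ — and the harmless fact that blow-ups realize only orders $n$ divisible by $|V(G)|$, which is immaterial because the extremal sequences defining $I(H)$ and $i(H)$ converge and hence converge along any subsequence.
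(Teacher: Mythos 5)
Your proof is correct and follows essentially the same route as the paper, which simply observes that blow-ups of $G$ preserve the repetitive $t$-profile and that $R$ may replace $P$ in the definitions of $i(H)$ and $I(H)$ for large graphs. You have merely spelled out the sandwiching and the with/without-replacement comparison that the paper leaves implicit.
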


Each of the exactly known inducibilities in Table~\ref{exootable} is
attained by a sequence of blow-ups of some graph,
but as the next lemma shows this is not always the case.
A graph is called \emph{twin-free} if no two vertices have
the same set of neighbors among the other vertices. Of course, most graphs are twin-free.
An example of relevance here is the $4$-path $P_4$.

\begin{lem}\label{twins}
For every twin-free graph $H$ and for every graph $G$,
$$ R(H,G) < I(H) \;.$$
\end{lem}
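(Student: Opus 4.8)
The plan is to show that a sequence of blow-ups of $G$ cannot be extremal for $I(H)$ when $H$ is twin-free, by exhibiting a strictly better graph. The key observation is that a blow-up of $G$ has ``built-in twins'': if $G'$ is the order-$m$ blow-up of $G$, then for each vertex $g \in V(G)$ the $m$ copies $(g,1),\dots,(g,m)$ are pairwise twins. Any sampled $t$-tuple that uses two vertices from the same class $\{g\}\times\{1,\dots,m\}$ induces a graph with (at least) a pair of twins, hence cannot be isomorphic to the twin-free graph $H$. So $R(H,G) = R(H,G)$ is really only ``seeing'' the tuples that land in $t$ distinct classes, i.e. $R(H,G) = P(H,G'')$ where $G''$ is the subgraph of $G'$ on one representative per class — but that is just $G$ again, essentially saying $R(H,G)$ counts only the injective samples. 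The point is that there is slack: the repetitive profile spreads probability mass onto $t$-tuples that hit a class twice, and \emph{all} of that mass is wasted on non-copies of $H$.

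First I would make this precise. Let $n = |V(G)|$ and consider sampling $t$ vertices of $G$ with replacement. With probability $n^{-t}\,\cdot\,(\text{number of non-injective words})$, which is a fixed positive constant $\delta = \delta(n,t) > 0$ since $t \ge |H| \ge 2$, the sample is non-injective, and then the induced loopless graph on the sampled \emph{distinct} vertices has fewer than $t$ vertices, so in particular two of the $t$ sampled positions are ``glued'' and the resulting $t$-vertex structure (reading repeated vertices as repeated) has a pair of twins; thus it is not a copy of $H$. Hence $R(H,G) \le 1 - \delta < 1$ uniformly, but more usefully: $R(H,G)$ equals the probability of landing in an injective tuple \emph{and} that tuple inducing $H$, which is at most the probability of landing in an injective tuple that induces $H$. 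Now I would pass to an ordinary (loopless, twin-free) graph $G^\star$ of some large order $N$ with $P(H,G^\star) \ge R(H,G) $ — for instance take $G^\star$ to be a blow-up of $G$ restricted to injective samples, or simply invoke that $I(H) \ge R(H,G)$ from Lemma~\ref{qlemma} — and then \emph{perturb} it: replace one vertex $v$ of $G^\star$ by two non-adjacent (or adjacent) copies chosen so as to break no existing $H$-copy and create at least one new one. Because $H$ is twin-free, $v$'s two copies are twins and so they never both appear in an $H$-copy; counting shows the number of induced $H$'s strictly increases relative to $\binom{N+1}{t}$ provided $v$ was chosen in at least one induced copy of $H$ to begin with. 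Iterating, $I(H) > R(H,G)$.

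A cleaner route, which I would actually prefer to write up, is the following averaging/variance argument. Consider the order-$m$ blow-up $G'$ and sample $t$ \emph{distinct} vertices uniformly; the probability they induce $H$ tends to $R(H,G)$ as $m\to\infty$, but for finite $m$ it is \emph{strictly larger} than $R(H,G)$, because the conditional-on-injective distribution overweights tuples in distinct classes, which are exactly the ones that can induce the twin-free $H$, relative to the with-replacement distribution. Quantitatively, $P(H,G') = R(H,G)\big/\Pr[\text{injective}] \ge R(H,G)\,(1+\delta')$ for some $\delta' = \delta'(m)>0$ whenever $R(H,G)>0$; and if $R(H,G)=0$ there is nothing to prove since $I(H)>0$ always. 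Taking $m$ fixed (say $m=2$) already gives a single finite graph $G'$ with $P(H,G') > R(H,G)$, whence $I(H) \ge \lim_n \max_{|G|=n} P(H,G) \ge P(H,G') > R(H,G)$.

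The main obstacle is handling the degenerate case $R(H,G)=0$ and, more delicately, making sure the strict inequality survives the $\limsup$ in the definition of $I(H)$ — i.e. producing a \emph{fixed} finite graph strictly beating $R(H,G)$ rather than merely a sequence converging to it from above. The blow-up-to-order-$2$ computation above resolves exactly this: it converts the ``limit is approached from above'' phenomenon into one honest finite witness, and the twin-freeness of $H$ is precisely what guarantees the wasted mass $\Pr[\text{non-injective}] \cdot (\text{stuff}) $ is genuinely disjoint from the $H$-count rather than partially recoverable.
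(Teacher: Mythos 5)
There is a genuine gap, and it sits exactly at the point you flag as ``the main obstacle.'' Your preferred route ends with the inference $I(H) \ge \lim_n \max_{|G|=n} P(H,G) \ge P(H,G') > R(H,G)$, but the middle inequality is false: as the paper notes (citing Pippenger--Golumbic), the sequence $\max_{|G|=n} P(H,G)$ \emph{decreases} to $I(H)$, so a single finite graph with large $P(H,\cdot)$ lower-bounds only $\max_{|G|=N}P(H,\cdot)$, never the limit (compare $P(P_4,P_4)=1$ while $I(P_4)<1$). The only mechanism available for converting a finite witness into a lower bound on $I(H)$ is Lemma~\ref{qlemma}, i.e.\ $R(H,G'')\le I(H)$ via the blow-up sequence of $G''$ --- so the finite witness must beat $R(H,G)$ in its \emph{repetitive} density $R$, not in $P$. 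And the slack you identify, $P(H,G'_m)=R(H,G)/\Pr[\text{injective}]>R(H,G)$, does not survive this conversion: any blow-up of $G'_m$ is again a blow-up of $G$, so its repetitive $t$-profile is still $\mathbf{R}_t(G)$ and its $P(H,\cdot)$ converges back down to $R(H,G)$. The conditioning-on-injectivity gain is precisely the $O(t^2/|G'|)$ error term that the limit annihilates; it cannot yield a strict inequality between the two limits. Your first route has the same defect, plus an unproved step: duplicating a vertex increases the \emph{count} of induced copies but also the denominator $\binom{N+1}{t}$, and the density need not go up for an arbitrary choice of $v$.

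The missing idea is a genuine \emph{modification of edges}, not just resampling or duplication. The paper blows $G$ up to order exactly $t=|H|$, so that one twin class $U$ has $t$ vertices, and then rewires the edges \emph{inside} $U$ (currently a clique or anticlique) to plant a brand-new induced copy of $H$ there. Twin-freeness enters to guarantee that no pre-existing copy of $H$ meets $U$ in more than one vertex, hence the rewiring destroys nothing; the resulting graph $G''$ therefore satisfies $R(H,G'')>R(H,G')=R(H,G)$, a strict gain in the repetitive density, and Lemma~\ref{qlemma} applied to $G''$ gives $I(H)\ge R(H,G'')>R(H,G)$. You correctly observed that blow-ups waste all the probability mass landing twice in a twin class; the proof needs the further step of \emph{reclaiming} some of that wasted mass by editing the graph, rather than hoping the waste itself forces a strict inequality in the limit.
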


\begin{proof}
Let $G'$ be a blow-up of $G$ of order $t = |H|$.
If $U = \{(u,1),...,(u,t)\}$ is the blow-up of a vertex $u$ of $G$,
then $U$ induces either a clique or an anticlique in $G'$.
No copy of $H$ in $G'$ can have more than one vertex from $U$,
since $H$ is twin-free and every two vertices in $U$ are twins in $G'$.
We next define $G''$:
Start from $G'$ and modify the edges among $U$'s vertices to create there a new copy of $H$.
Note that this cannot jeopardize the existing induced copies of $H$.
Therefore $R(H,G) = R(H,G') < R(H,G'') \leq I(H)$ as required.
\end{proof}

\begin{rmk*}
See~\cite{hatami2011} for conditions under which inducibility is attained by blow-ups.
\end{rmk*}

Lemma~\ref{twins}'s proof naturally leads to the following construction,
by Pippenger and Golumbic~\cite{pippenger1975}.
The \emph{composition} of two graphs $G$ and $H$, which we denote $G \odot H$,
is essentially a blow-up of $G$, with a copy of $H$ corresponding to every vertex of $G$.
Formally, $G \odot H$ is a graph on $V(G) \times V(H)$, with edges
$$ (g,h) \sim (g',h') \;\;\;\;\; \Leftrightarrow \;\;\;\;\;
(g \sim g') \text{ or } (g = g' \text{ and } h \sim h') \;.$$
Note that this operation is non-commutative but associative.
We use the shorthand $G^{\odot n}$ for the iterated composition of $n$ copies of $G$.
The sequence $\left\{ G^{\odot n} \right\}_{n=1}^{\infty}$
is called the \emph{nested blow-up} of $G$.

The nested blow-up of a graph $H$ yields the above-mentioned lower bound
$I(H) \geq t!/(t^t-\nobreak t)$, where~$t=|H|$.
For $H = C_t$ $(t \geq 5)$,
Pippenger and Golumbic showed that this bound is tight up to a constant factor,
and conjectured it to be sharp.
For example, by nested blow-up
$I(C_5) \geq\nobreak 5!/(5^5- \nobreak 5) = 1/26 \approx 0.03846154$,
rather than $5!/5^5 = 0.0384$ obtained by regular blow-ups.
Flag algebra calculus yields a different but close upper bound of $0.03846157$~\cite{flagmatic}.

Exoo~\cite{exoo1986} noticed that
sometimes the nested blow-up of some $H' \neq H$ beats the nested blow-up of $H$.
Indeed, suppose that a $t$-vertex graph $H$ is obtained from a vertex-transitive graph $H'$
by removal of a single vertex.
In this case, the nested blow-up of $H'$ gives
$$ I(H) \;\geq\; \frac{t!}{(t+1)^{t-1}-1}
\;\approx\; \sqrt{2\pi} \cdot (t+1)^{3/2}e^{-t-1} ,$$
which is superior to the above bound by about $(t+1)/e$.
In particular, taking $H'=C_5$ he obtained $I(P_4) \geq 6/31 \approx 0.1935$.
By computer search he found an even better candidate, the Paley graph $Q_{17}$
in which two elements of the finite field $\F_{17}$ are neighbors iff their difference is a square.
The nested blow-up of $Q_{17}$ yields $I(P_4) \geq 60/307 \approx 0.1954$.

We turn to describe the other main building block,
which is due to Thomason~\cite{thomason1997}.
Let $G$ and $H$ be graphs, possibly with loops.
Their \emph{tensor product}, $G \otimes H$,
has vertex set $V(G) \times V(H)$, where
$$ (g,h) \sim (g',h') \;\;\;\;\; \Leftrightarrow \;\;\;\;\;
(g \sim g' \text{ and } h \not \sim h') \text{ or }
(g \not \sim g' \text{ and } h \sim h') \;.$$
In particular, $G \otimes A_n$ is a blow-up of $G$.
A less trivial example is
$K_3 \otimes K_3$  which is isomorphic to the Paley graph of order~$9$.
Note that the tensor product is associative and commutative.

As Thomason observed, and we explain below,
the $t$-profile of $G \otimes H$ can be easily computed from those of $G$ and~$H$.
This considerably simplified his original refutation of the Erd\H{o}s conjecture.
For the special case of $i(K_4 + A_4)$,
he found plenty of further counterexamples.
In particular, a computer investigation of products of small graphs yielded
$$ R(K_4 + A_4,\; M_4 \otimes K_4 \otimes K_3 \otimes K_3 ) \;=\;
\frac{11411}{373248} \;\approx\; \frac{1}{32.7095} \;.$$
Then, a broader search on larger Cayley graphs enhanced the bound to
$$ R(K_4 + A_4,\; M_4 \otimes K_4 \otimes G_{18} ) \;=\;
\frac{3769}{124416} \;\approx\; \frac{1}{33.0103} \;,$$
where we define $G_{18} = (K_3 \otimes K_3) \odot K_2$.
A final remark in~\cite{thomason1997}, attributed to a referee,
states that this construction is still not optimal.
Replacing $K_2$ by a randomly perturbed blow-up of $K_2$
yields a slight improvement ($< 10^{-7}$).

The above chain of events makes it natural to ask, what would happen
if $(K_3 \otimes K_3) \odot K_2$ were replaced by a nested blow-up of $K_3 \otimes K_3$.
Indeed, as we explain in the next section
\begin{equation}
R\left(K_4 + A_4,\; M_4 \otimes K_4 \otimes (K_3 \otimes K_3)^{\odot n} \right)
\;\xrightarrow{\;n\rightarrow\infty\;}\; \frac{1411}{46592} \;\approx\; \frac{1}{33.0205} \;.
\label{RK4A4}
\end{equation}
Note that Proposition~\ref{iK4A4} follows from this by Lemma~\ref{qlemma}.
In fact, the graph sequence that demonstrates Proposition~\ref{IP4}
hides in the $4$-profiles along the way:
\begin{equation}
I(P_4) \;\geq\; R\left(P_4,\; K_4 \otimes (K_3 \otimes K_3)^{\odot n} \right)
\;\xrightarrow{\;n\rightarrow\infty\;}\; \frac{1173}{5824} \;\approx\; 0.2014 \;.
\label{RP4}
\end{equation}

\section{Computation}\label{compsect}

We next develop general tools,
which may also provide some insight on the construction.
We begin with a slightly different formulation of Thomason's analysis
from~\cite{thomason1997}.

Recall that $\mathbf{R}_t(G)$ and $\mathbf{P}_t(G)$
are the distributions of induced graphs on $t$ random vertices,
sampled with and without replacements, respectively.
It is convenient to define the \emph{labeled $t$\nobreakdash-profile},
which accounts also for the ordering of the sample.
For a labeled $t$-vertex graph~$H$ we define $r(H,G) = R(H,G)/|orbit(H)|$,
where $orbit(H)$ corresponds to the action of the symmetric group $S_t$
through relabeling of the vertices.
Equivalently $R(H,G) = r(H,G) \cdot [S_t:Aut(H)]$,
where $Aut(H)$ is the group of automorphisms of~$H$.
To complete the picture,
we similarly define $p(H,G) = P(H,G)/|orbit(H)|$ in the non-repetitive case.

The labeled graphs of order $t$ admit a natural group structure,
with the operation of symmetric difference of the edges.
One can apply the discrete Fourier transform over this group,
and obtain $\mathbf{\hat{r}}_t(G)$, the \emph{spectral $t$-profile}:
$$ \hat{r}(H,G) = \sum\limits_{H'}(-1)^{e(H \cap H')} \cdot r(H',G) $$
The summation is over all $2^{\binom{t}{2}}$ labeled graphs with $t$ vertices,
and $e(H \cap H')$ is the number of edges which appear in both $H$ and $H'$.
Observe that $\hat{r}(A_t,G) = 1$ for every $G$.
Note also that $\hat{r}(H,G)$ is constant on classes of graph isomorphism,
exactly like $r(H,G)$.
Therefore we can refer to, e.g., $\hat{r}(C_4,G)$
without specifying labels on the vertices of $C_4$.

The basic observation about the tensor product is that
sampling vertices in $G \otimes G'$ can be separated,
in a sense, to sampling each factor independently.
More precisely,
if the $G$ components of the sample create $H$ and the $G'$ components create $H'$,
then the sampled graph is the symmetric difference $H \triangle H'$.
Consequently,
$$ \mathbf{r}_t(G \otimes G') \;=\; \mathbf{r}_t(G) \ast \mathbf{r}_t(G') $$
where $\ast$ stands for convolution over the group of labeled graphs.
From this immediately follows,

\begin{cor}[Thomason]\label{pointwise}
\begin{equation}
\hat{r}(H,\;G \otimes G') \;=\; \hat{r}(H,G) \; \hat{r}(H,G')
\label{pointwisewise}
\end{equation}
\end{cor}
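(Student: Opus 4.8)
The plan is to recognize Corollary~\ref{pointwise} as an instance of the convolution theorem for the Fourier transform on a finite abelian group, applied to the identity $\mathbf{r}_t(G\otimes G') = \mathbf{r}_t(G)\ast\mathbf{r}_t(G')$ already recorded just above the statement.

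First I would make the algebraic setting explicit. The set $\mathcal{G}_t$ of labeled graphs on $\{1,\dots,t\}$, with the operation $H'\triangle H''$ of symmetric difference of edge sets, is the elementary abelian $2$-group $\mathbb{F}_2^{\binom{t}{2}}$, one coordinate per potential edge. Its characters are indexed by the same group: to a graph $H$ one associates $\chi_H(H') = (-1)^{e(H\cap H')}$. That $\chi_H$ is a homomorphism follows because intersection distributes over symmetric difference, $H\cap(H'\triangle H'') = (H\cap H')\triangle(H\cap H'')$, and $e(A\triangle B)=e(A)+e(B)-2e(A\cap B)$, so $e(H\cap(H'\triangle H''))\equiv e(H\cap H')+e(H\cap H'')\pmod 2$; hence $\chi_H(H'\triangle H'')=\chi_H(H')\chi_H(H'')$. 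With this notation the definition of the spectral profile reads $\hat r(H,G)=\sum_{H'}\chi_H(H')\,r(H',G)$, i.e.\ $\hat r(\cdot,G)$ is exactly the discrete Fourier transform of the function $H'\mapsto r(H',G)$ on $\mathcal{G}_t$.

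Next I would recall why $\mathbf{r}_t(G\otimes G') = \mathbf{r}_t(G)\ast\mathbf{r}_t(G')$. Sampling an ordered $t$-tuple of vertices of $G\otimes G'$ with replacement amounts to choosing $(a_1,b_1),\dots,(a_t,b_t)$ with $(a_i)_i$ uniform in $V(G)^t$, $(b_i)_i$ uniform in $V(G')^t$, and the two independent. By the definition of the tensor product, $(a_i,b_i)\sim(a_j,b_j)$ iff exactly one of $a_i\sim a_j$ and $b_i\sim b_j$ holds, so the labeled graph induced by the sample has edge set equal to the symmetric difference of the edge sets of the graphs induced by $(a_i)_i$ in $G$ and by $(b_i)_i$ in $G'$. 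Hence $\mathbf{r}_t(G\otimes G')$ is the law of $X\triangle Y$ for independent $X\sim\mathbf{r}_t(G)$, $Y\sim\mathbf{r}_t(G')$, which is by definition their convolution on $\mathcal{G}_t$. (This is where it matters that vertices are sampled \emph{with} replacement and loops are allowed, so the two coordinates are genuinely independent and uniform.)

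Finally I would invoke the convolution theorem: for functions $f,g$ on $\mathcal{G}_t$,
\[
\widehat{f\ast g}(H)\;=\;\sum_{H'}\chi_H(H')\sum_{H''}f(H'')\,g(H'\triangle H'')\;=\;\sum_{H'',H'''}\chi_H(H'')\,\chi_H(H''')\,f(H'')\,g(H''')\;=\;\hat f(H)\,\hat g(H),
\]
where the middle step substitutes $H'''=H'\triangle H''$ and uses $\chi_H(H')=\chi_H(H'')\chi_H(H''')$. Taking $f=r(\cdot,G)$ and $g=r(\cdot,G')$ and combining with the previous two paragraphs gives $\hat r(H,G\otimes G')=\hat r(H,G)\,\hat r(H,G')$. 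The ``hard part'' here is really only bookkeeping: one must check that the group law powering the Fourier transform (symmetric difference) is the very one under which the tensor sample factors, and that the \emph{labeled}, replacement-sampled profile $r$ is used throughout — the unlabeled profile $R$ or the replacement-free $P$ would not convolve, and so would not yield a pointwise product.
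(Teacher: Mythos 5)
Your proposal is correct and takes essentially the same route as the paper: the paper likewise derives the identity $\mathbf{r}_t(G\otimes G')=\mathbf{r}_t(G)\ast\mathbf{r}_t(G')$ from the independence of the two coordinates of a with-replacement sample and the symmetric-difference rule of the tensor product, and then reads off the corollary as the convolution theorem for the Fourier transform over the group of labeled $t$-vertex graphs. You merely spell out the steps the paper leaves implicit (the character property of $H'\mapsto(-1)^{e(H\cap H')}$ and the change of variables in the convolution sum), all of which check out.
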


We are now in position to explain Thomason's method.
The spectral $4$-profiles of many tensor products of small graphs
were computed fairly easily by Corollary~\ref{pointwise}.
By the inverse transform, $r(K_t + A_t, G)$ is the average of $\mathbf{\hat{r}}_t(G)$
over all graphs of even number of edges.
If $t=4$ then this amounts to the following linear functional:
\begin{equation}
r(K_4 + A_4, G) = \frac{
1 + \hat{r}(K_4,G) + 3\hat{r}(M_4,G) + 3\hat{r}(C_4,G) + 12\hat{r}(Q_4,G) + 12\hat{r}(V_4,G)
}{32} \;.
\label{funcK4A4}
\end{equation}
We are interested also in
\begin{equation}
r(P_4, G) = \frac{
1 - \hat{r}(K_4,G) + \hat{r}(M_4,G) - \hat{r}(C_4,G) + 4\hat{r}(Q_4,G) - 4\hat{r}(V_4,G)
}{64} \;.
\label{funcP4}
\end{equation}

The following lemma applies a similar line of reasoning,
in order to find the $t$-profile of the composition of graphs.

\begin{lem}\label{bilinear}
Let $s,t \geq 2$, and let $G$ and $G'$ be two graphs where $|G| = s$.
Then, there exists a bilinear operator $B_{s,t}$ such that
$$\mathbf{r}_t(G \odot G') \;=\; B_{s,t}(\mathbf{p}_m(G),\mathbf{r}_t(G')) $$
where $m = \min(s,t)$.
\end{lem}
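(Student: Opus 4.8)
The plan is to describe the sampling process in $G\odot G'$ explicitly and show that $r(H,G\odot G')$ depends only on $\mathbf p_m(G)$ and $\mathbf r_t(G')$, in a way that is separately linear in each. Recall that a vertex of $G\odot G'$ is a pair $(g,h)$, and when we sample $t$ labeled vertices $(g_1,h_1),\dots,(g_t,h_t)$ with replacement, the choices of the $g_i$'s and the $h_i$'s are made independently and uniformly. Group the sample by its first coordinate: this induces a set partition $\pi$ of $[t]$ into the blocks $\{i : g_i = g\}$, together with a labeled graph on the (at most $s$) distinct values of $g$ that occur, recording the adjacencies in $G$. By the composition rule, two sampled vertices $(g_i,h_i)\sim(g_j,h_j)$ in $G\odot G'$ iff $g_i\sim g_j$ in $G$, or $g_i=g_j$ and $h_i\sim h_j$ in $G'$. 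Hence the labeled graph induced on the sample is completely determined by (a) the partition $\pi$ and the $G$-adjacencies between distinct first coordinates, and (b) for each block of $\pi$, the labeled $G'$-graph induced by the corresponding $h_i$'s.

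First I would set up the combinatorial bookkeeping: since at most $m=\min(s,t)$ distinct vertices of $G$ can appear among $t$ samples, the relevant data about $G$ is exactly a labeled graph on $\le m$ vertices together with the multiplicities, i.e.\ a distribution that is a deterministic function of $\mathbf p_m(G)$ — indeed, sampling $k\le m$ distinct labeled vertices of $G$ without replacement yields precisely $p(\cdot,G)$ on $k$-vertex graphs, and sampling $t$ with replacement is obtained by first choosing the partition pattern (with probabilities depending only on $s$ and $t$, the ``surjection/Stirling'' weights) and then filling in a $p_m(G)$-distributed graph on the distinct values. For each block $B$ of the partition, the $|B|$ coordinates $h_i$ with $i\in B$ are an independent sample \emph{with replacement} of $|B|\le t$ vertices of $G'$, so the induced labeled $G'$-graph on $B$ is distributed according to $r(\cdot,G')$ restricted to $|B|$ vertices — which is itself a linear readout of $\mathbf r_t(G')$ (marginalizing a $t$-sample down to a $|B|$-sample is linear). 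Assembling: $r(H,G\odot G')$ is a sum over (partition pattern of $[t]$) $\times$ (labeled $G$-graph on the blocks) $\times$ (tuple of labeled $G'$-graphs, one per block), each term a product of the corresponding $p_m(G)$-entry and the $r(\cdot,G')$-entries, restricted to those choices whose ``assembled'' symmetric-difference-free combination equals $H$. Collecting these into a tensor $B_{s,t}$ whose entries are $0$/$1$ times the partition-pattern weights gives the desired bilinear operator: it is linear in $\mathbf p_m(G)$ because that variable enters each term to the first power (the $G$-graph on the blocks appears once), and linear in $\mathbf r_t(G')$ after one observes that within a single term the several $G'$-factors come from disjoint blocks and hence from \emph{independent} sub-samples, so the product structure is exactly a multilinear form — but grouping all but one block's contribution into the coefficient makes it linear in the remaining one, and by symmetry of the construction linear in $\mathbf r_t(G')$ as a whole in the usual sense that $B_{s,t}(x,\cdot)$ and $B_{s,t}(\cdot,y)$ are linear maps.

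The main obstacle, and the point that needs care rather than cleverness, is the last sentence of the previous paragraph: a priori the dependence on $\mathbf r_t(G')$ looks \emph{multilinear of degree up to $t$} (one factor per block), not linear. The resolution is that the block sizes sum to $t$, and the ``profile of an independent product sample'' is governed by convolution over the labeled-graph group (exactly as in Thomason's tensor analysis preceding this lemma); more to the point, the entire vector of joint distributions of $(h_i)_{i\in[t]}$ — grouped by blocks — is an affine image of $\mathbf r_t(G')$ alone, since $\mathbf r_t(G')$ already records the law of a length-$t$ with-replacement sample, from which every block-marginal and every product-of-block-marginals (the blocks being sampled independently) is determined by a fixed linear map depending only on the partition pattern. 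So once I show that ``block-decomposed $t$-sample statistics of $G'$'' $=$ (fixed linear map depending on $s,t$) applied to $\mathbf r_t(G')$, bilinearity is immediate. I would therefore structure the proof as: (1) the composition adjacency rule and the partition decomposition; (2) the claim that the $G$-side data is a linear function of $\mathbf p_m(G)$ with explicit partition-pattern coefficients; (3) the claim that the $G'$-side data (all blocks together) is a linear function of $\mathbf r_t(G')$; (4) combine (2) and (3) — each $H$-entry of $\mathbf r_t(G\odot G')$ is a sum of products of one $G$-entry and one aggregated $G'$-entry — to define $B_{s,t}$ and read off bilinearity. Step (3) is the one I expect to require the most attention to phrase correctly.
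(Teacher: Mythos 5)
Your proposal is correct and follows essentially the same route as the paper's proof: condition on the partition of the $t$-sample induced by the $G$-coordinates (with the partition-pattern weights $(s)_{\ell}/s^{t}$), read the between-block data off $\mathbf{p}_m(G)$, and read the within-block data off $\mathbf{r}_t(G')$ as a single marginal of the joint $t$-sample law rather than a product over blocks. Your resolution of the apparent multilinearity in $\mathbf{r}_t(G')$ is exactly the paper's inner sum $\sum_{H'\in\Gamma(H,\lambda)}r(H',G')$, so no further comparison is needed.
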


\begin{proof}
The proof is a matter of straightforward computation of $B_{s,t}$,
which we shortly spell out
for completeness of exposition and for future reference.
All the necessary terminology is developed,
but some easy verifications are left to the reader.

Suppose that $H$ is a labeled graph on the vertex set $V = V(H)$.
The induced labeled graph on a vertex subset $V' \subseteq V$ is denoted $H[V']$.
In the other direction, if $H'$ is a labeled graph on some $V' \subseteq V$,
then we denote $\Gamma(H',V) = \{H : V(H)=V,\;H[V'] = H'\}$.
The following observation is obvious:
\begin{equation}
p(H',G) \;=\; \sum\limits_{H \in \Gamma(H',V)} p(H,G) \;.
\label{ppp}
\end{equation}
In other words, for every $j < m$ the $j$-profile $\mathbf{p}_j(G)$ is given by
a linear projection of $\mathbf{p}_m(G)$ on $j$ fixed vertices.
This relation will permit us some flexibility when we work with
the first argument of $B_{s,t}$.

Let $\Lambda(V)$ be the collection of all partitions of $V$ into disjoint sets.
Given a partition $\lambda \in \Lambda(V)$,
we denote the number of the parts by $\ell = \ell(\lambda)$,
and arbitrarily fix their indexes:
$ V = \lambda_1 \cup \lambda_2 \cup ... \cup \lambda_{\ell} $.
By abuse of notation, we write
$ \Gamma(H,\lambda) = \{H' : V(H')=V(H), \;\forall i\; H'[\lambda_i] = H[\lambda_i] \} $.
For example, if $\lambda = \{V\}$, the trivial one-part partition,
then $\Gamma(H,\lambda) = \{H\}$,
while the other trivial partition into singletons $\lambda = \{\{v\}:v\in V\}$
yields all graphs on $V$.

The \emph{transversals} of $\lambda$ are the ordered sets of representatives,
one from each part:
$ \text{Tr}(\lambda) \;=\;
\left\{ (v_1,v_2,...,v_{\ell}) : \forall i\; v_i \in \lambda_i \right\} $.
A partition $\lambda \in \Lambda(V)$ is said to be \emph{admissible} with respect to $H$
if $H[V'] = H[V'']$ for every $V',V'' \in \text{Tr}(\lambda)$,
where $V'$ and $V''$ are naturally identified according to the partition.
For example, the two above-mentioned trivial partitions are $H$-admissible for every~$H$.
The reader may verify that if $H=P_4$ then no other partition is admissible.
The set of all $H$-admissible partitions in denoted~$\Lambda(H)$.
By further abuse of notation, if $\lambda \in \Lambda(H)$
we define $H[\lambda] = H[V']$ for some $V' \in \text{Tr}(\lambda)$.

Let $G$ and $G'$ be as in the lemma, and let $H$ be a labeled graph on $t$ vertices.
We carry out the counting of induced $H$-s in $G \odot G'$,
by applying the law of total probability over the possible partitions of~$V(H)$
according to the $G$ component.
The reader may verify that this leads to an expression of the following form:
\begin{equation}
r(H,\;G \odot G') \;=\;
\sum\limits_{\lambda \in \Lambda(H)} \frac{(s)_{\ell(\lambda)}}{s^t} \cdot
p(H[\lambda], G) \sum\limits_{H' \in \Gamma(H, \lambda)} r(H',G') \;.
\label{rrr}
\end{equation}
Recall that the falling factorial $(s)_\ell = s(s-1)(s-2)...(s-\ell+1)$ is zero if $\ell > s$.
Together with~(\ref{ppp}), this implies that $r(H,\;G \odot G')$ is bilinear in
$\mathbf{p}_{\min(s,t)}(G)$ and $\mathbf{r}_t(G')$, as required.
\end{proof}

\begin{rmk*}
Let $G'=K_1$, the graph with a single vertex and no edges,
and suppose that $G$ doesn't contain loops.
Then Lemma~\ref{bilinear} establishes a useful relation between
$\mathbf{r}_t(G)$ and $\mathbf{p}_t(G)$, the profiles with and without replacements.
The inner sum in~(\ref{rrr}) reduces, in this case, to $0$ or $1$.
\end{rmk*}

Consider the linear map $f_t(G)$, defined by
$f_t(G)\mathbf{v} = B_{s,t}(\mathbf{p}_m(G),\mathbf{v})$.
Note that $f_t(G)$ is represented by a stochastic matrix,
and in non-degenerate cases it has a single stationary state vector $\mathbf{q}_t(G)$,
such that $f_t(G) \mathbf{q}_t(G) = \mathbf{q}_t(G)$.
This implies
\begin{equation}
\mathbf{q}_t(G) \;=\;
\lim\limits_{n \rightarrow \infty} \mathbf{r}_t\left(G^{\odot n}\right)\;.
\label{lim}
\end{equation}
Thus $\mathbf{q}_t(G)$ is the limiting $t$-profile of the nested blow-up of $G$,
or shortly the \emph{nested $t$-profile} of $G$.
Finding $\mathbf{q}_t(G)$ reduces to computing the Perron-Frobenius eigenvector of $f_t(G)$.
We also define an unlabeled variant, $Q(H,G) = q(H,G) \cdot |orbit(H)|$.

We now demonstrate the above notions on the graph $G = K_3 \otimes K_3$.
Here Lemma~\ref{bilinear} reads
$\mathbf{r}_4(G \odot\nobreak G') = B_{9,4}(\mathbf{p}_4(G),\mathbf{r}_4(G'))$,
which lets us compute $f_4(G)$ using~(\ref{rrr}).
Instead of $\mathbf{q}_4(G)$, we prefer to consider
the equivalent unlabeled nested $4$\nobreakdash-profile $\mathbf{Q}_4(G)$,
which is the eigenvector of another matrix $F_4(G)$, easily derived from $f_4(G)$.
This reduces the order of the matrix from $64$ to $11$,
which enables us to explicitly record it here.
The ordering of the basis is fixed to
$ K_4, A_4, T_4, S_4, M_4, C_4, Q_4, V_4, D_4, E_4, P_4 $ (See Table~\ref{exootable}).
$$ F_4(K_3 \otimes K_3) \;=\; \frac{1}{729} \left( \begin{array}{ccccccccccc}
 53 &   0 &  16 &  12 &  12 &  24 &  24 &   8 &  36 &   4 &  16 \\
  0 &  53 &  12 &  16 &  24 &  12 &   8 &  24 &   4 &  36 &  16 \\
112 &   0 &  53 &  48 &  32 &  64 &  68 &  32 &  88 &  16 &  48 \\
  0 & 112 &  48 &  53 &  64 &  32 &  32 &  68 &  16 &  88 &  48 \\
 84 &  24 &  48 &  48 &  45 &  64 &  60 &  40 &  72 &  32 &  52 \\
 24 &  84 &  48 &  48 &  64 &  45 &  40 &  60 &  32 &  72 &  52 \\
192 &  96 & 156 & 144 & 144 & 160 & 165 & 136 & 176 & 120 & 152 \\
 96 & 192 & 144 & 156 & 160 & 144 & 136 & 165 & 120 & 176 & 152 \\
 48 &  24 &  48 &  60 &  32 &  56 &  56 &  44 &  57 &  32 &  48 \\
 24 &  48 &  60 &  48 &  56 &  32 &  44 &  56 &  32 &  57 &  48 \\
 96 &  96 &  96 &  96 &  96 &  96 &  96 &  96 &  96 &  96 &  97
\end{array} \right)
$$
which implies
$$
\mathbf{Q}_4(K_3 \otimes K_3) \;=\; \frac{1}{728} \left( \begin{array}{ccccccccccc}
17 &  17 &  50 &  50 &  51 &  51 & 150 & 150 &  48 &  48 &  96
\end{array} \right)^T\;.
$$
We now proceed to the spectrum. Following Thomason,
we write down the relevant Fourier coefficients of each factor of the tensor products
in question.
\begin{center}
\begin{tabular}{l*{5}{c}}
\hline
$H$                 		& $K_4$ & $M_4$ & $C_4$ & $Q_4$ & $V_4$ \\
\hline
$\hat{r}(H,K_4)$ 			& -1/2  &  1/4  &  1/4  & -1/8  &  1/4  \\
$\hat{r}(H,M_4)$            &  1/2  &  1/4  &  1/4  &  1/8  &  1/4  \\
$\hat{q}(H,K_3 \otimes K_3)$& 18/91 &   0   &  9/91 &   0   &   0   \\
\hline
\end{tabular}
\end{center}
The desired limits in~(\ref{RK4A4}) and~(\ref{RP4}) then follow
by plugging these numbers into~(\ref{funcK4A4}) and~(\ref{funcP4})
by means of~(\ref{pointwisewise}) and~(\ref{lim}).

\section{Discussion}\label{discsect}

Propositions~\ref{IP4} and~\ref{iK4A4}
run new candidates for $I(P_4)$ and $i(K_4+A_4)$ respectively.
The constructed graphs have many symmetries, and in particular are vertex-transitive.
It is unclear whether an extremal construction for these problems has to be symmetric at all.
The authors have no particular reason to expect these graphs to be best possible,
even within the capability of the methods in use.
In this section we give a brief account of the state of these questions
for graphs with more than $4$ vertices, which we believe to support our skepticism.

Razborov's flag algebra calculus~\cite{razborov2007,razborov2013}
is the most powerful currently available method
for proving that some candidate construction is optimal.
However, as discussed in Section $4$ of~\cite{falgas2013},
it is not so good in dealing with nested constructions like ours.
This, too, can at least partly explain the current gaps between the bounds.

What is the inducibility of longer paths?
Let $P_t$ be a path on $t \geq 5$ vertices.
As shown by Exoo (see Section~\ref{constsect}), the nested blow up of $C_{t+1}$ yields
$ I(P_t) \geq t!/((t+1)^{t-1}-1) $.
An appropriate modification of the counting argument by Pippenger and Golumbic
(\cite{pippenger1975}, Theorem~$9$) gives $I(P_t) \leq t!/2(t-1)^{t-1}$.
In conclusion, the inducibility of the $t$-path is $\Theta\left(t^{3/2}\exp(-t)\right)$,
and the ratio between the two bounds is asymptotically $e^2/2$.

As for monochromatic cliques,
the asymptotic behavior of $i(K_t+A_t)$ remains an intriguing open problem,
related to bounds on the Ramsey number $R(t,t)$.
The best-known bounds for $t \geq 6$ are
$$ (2.18)^{-(1+o(1))t^2} \;\leq\; i(K_t+A_t) \;\leq\; 0.835 \cdot 2^{1-\binom{t}{2}} $$
by Conlon~\cite{conlon2012} and Thomason~\cite{jagger1996} respectively.
Conlon conjectures that
$2.18$ can be replaced with a lower constant, maybe even $\sqrt2$,
and remarks that the Erd\H{o}s conjecture may still be true within a constant factor.
R\"{o}dl, however, conjectures that
the upper bound can be improved at least exponentially in $t$~\cite{franek2002}.

Let's look closer at Thomason's construction.
In terms of the tensor product, it is given in~\cite{thomason1997} by blow-ups of
$G_t = K_4 \otimes M_4^{~\otimes~t-1}$.
Note that $M_4$ and $K_4$ are Cayley Graphs of~$\F_2^2$,
whose sets of generators are characterized by the functions
$m(x) = x_1x_2$ and $k(x) = x_1 + x_2 + x_1x_2$ respectively.
Indeed, Thomason's original representation of $G_t$ was as a Cayley graph of~$\F_2^{2t}$,
generated by the quadratic form
$q(x) = (x_1 + x_2 + x_1x_2) + x_3x_4 + ... + x_{2t-1}x_{2t}$ over~$\F_2$.
Suppose that $t$ is not divisible by $4$.
Then, by linear automorphisms of~$\F_2^{2t}$, $q(x)$~is further equivalent to
at least one of the following symmetric quadratic forms over~$\F_2$:
$$ s_1(x) = \sum\limits_{1 \leq i \leq j \leq 2t} x_i x_j \;,\;\;\;\;\;\;\;\;\;\;\;
s_2(x) = \sum\limits_{1 \leq i < j \leq 2t} x_i x_j \;.$$	
Note that $s_k(x)=1$ if and only if the Hamming weight of $x$ equals $k$~or~$k+1$ mod~$4$.
This can simplify our choice of generators.
For instance, $G_5$ can be generated by all vectors in $\F_2^{10}$ of Hamming weight
$\in\nobreak\{1,2,5,6,9,10\}$.

Franek and R\"{o}dl~\cite{franek1993}
introduced further counter-examples to the Erd\H{o}s conjecture,
using other Cayley graphs of $\F_2^n$
which are generated by sets of Hamming distances.
A computer search for such graphs provided the best-known upper bounds on $i(K_t+A_t)$ for
$6 \leq t \leq 8$~\cite{franek2002,deza2012,shen2012}.
For example, in $\F_2^{10}$ the weights set $\{0,2,5,6,9,10\}$ yields
$i(K_6+A_6) \leq 0.74444 \cdot 2^{-14}$, improving $0.76414 \cdot 2^{-14}$ obtained by $G_5$.
Note that the two graphs differ in about $1\%$ of the edges.
In general, these new constructions can be interpreted as
minor modifications of the ones derived from $s_1(x)$ and~$s_2(x)$.
It would be interesting to understand why this method generates better graphs,
and to find how its performance grows with~$t$.
Meanwhile, it indicates that our graph construction toolbox is still far from sufficient.

We close this note with a quick glance at $5$-vertex inducibilities.
Our current state of knowledge is collected in a table in Appendix~\ref{app1}.
Lower and upper bounds on the inducibility of all $34$ graphs with $5$ vertices are
listed there, together with a short description of the construction leading to the lower bound.
The quoted upper bounds were generated by
the excellent freeware \emph{Flagmatic}, created by Vaughan~\cite{flagmatic},
which has reduced such calculations to typing one line on the computer.
We used the method \texttt{GraphProblem(7,density=...).solve\_sdp()},
but didn't try to round the results to rational numbers.
Consequently, the upper bounds should only be viewed as well-established conjectures.

Some trends emerge in Table~\ref{exoo5table}.
In the first eight lines, the lower bound comes from blow-up of small graphs.
The choice of the blown-up graph is rather natural in all these cases.
Only in two of the cases are the upper and the lower bounds different,
and then, too, the difference is quite small.
In this view we suspect that the lower bound is the correct value.

The next five constructions are again either tight or at least plausibly so.
They each consist of two disjoint blow-ups of the same small graph,
with size-ratios optimized to $\alpha = 2+\sqrt{3}$.
This number comes up as the ratio $p:q$ that maximizes $pq^4+p^4q$ subject to $p+q=1$.
In all five cases the target graph is a connected graph plus an isolated vertex.
Moreover, the graph that we duplicate and blow up is
the best construction for the inducibility of the $4$-vertex component,
as in Table~\ref{exootable}.
The only exception is, again, $P_4$.
It is plausible that such relations between constructions carry on in larger graphs.

For the cycle $C_5$, nested blow-up has been long conjectured,
as discussed in Section~\ref{constsect}.

The remaining four cases seem to exhibit more complex behavior.
In two cases, our current constructions use random graphs.
One of them is tight and in the other one there is still a considerable gap,
which make us doubt its optimality.
It would be interesting to explore
the role of random constructions in the study of inducibility.
The challenge of derandomization suggests itself as well.
This may require the introduction of new machinery in the realm of constructions
beyond blow-ups, nesting and products.

Our best constructions for the two last cases,
the $5$-path and the self-complementary ``bull'' graph,
combine nested blow-up with tensor products.
This situation is similar to~$P_4$ and~$K_4+\nobreak A_4$, the heroes of this note.
Thus we believe these cases to be relevant as well to
the search for new interesting graph constructions.

\bibliographystyle{abbrv}
{ 
\bibliography{ind}
}

\newpage
\appendix
\section{The inducibility of 5-vertex graphs}
\label{app1}
\nobreak
\begin{table}[h]
\centering\renewcommand{\arraystretch}{2}\begin{tabu}{c c l l l l}
\tabucline[1pt]{-}\tabucline[1pt]{-}
$H,\;\overline{H}$ & orbit & FA bound & & Lower bound & Construction \\
\tabucline[1pt]{-}\tabucline[1pt]{-}
\pentagraph{}
\pentagraph{(1)--(2) (1)--(3) (2)--(3) (1)--(4) (2)--(4) (3)--(4) (1)--(5) (2)--(5) (3)--(5) (4)--(5) }
& 1
& 1
& = & 1 & $A_n$ \\ \hline
\pentagraph{(2)--(3) (2)--(4) (3)--(4) (1)--(5) }
\pentagraph{(1)--(3) (5)--(3) (1)--(4) (5)--(4) (1)--(2) (2)--(5) }
& 10
& 0.625
& = & 5/8 & $2 \times K_n$ \\ \hline
\pentagraph{(2)--(5) (1)--(4) }
\pentagraph{(3)--(2) (5)--(3) (3)--(4) (2)--(4) (5)--(4) (3)--(1) (2)--(1) (1)--(5) }
& 15
& 0.37037037
& = & 10/27 & $3 \times K_n$ \\ \hline
\pentagraph{(1)--(5) }
\pentagraph{(2)--(4) (2)--(3) (4)--(3) (2)--(5) (4)--(5) (3)--(5) (2)--(1) (4)--(1) (3)--(1) }
& 10
& 0.51367669
& $\geq$ &  0.5126953125 & $8 \times K_n$ \\ \hline
\pentagraph{(1)--(5) (2)--(3) (3)--(4) }
\pentagraph{(1)--(2) (2)--(3) (3)--(4) (4)--(5) (5)--(1) (1)--(4) (2)--(5) }
& 30
& 0.27777778
& = & 5/18 & $2 \times K_{n,n,n}$\\ \hline
\pentagraph{(5)--(1) (1)--(2) (2)--(4) (2)--(3) }
\pentagraph{(5)--(1) (1)--(2) (2)--(4) (2)--(3) (1)--(4) (4)--(5) }
& 60
& 0.192
& = & 24/125 & $C_5 \otimes A_n$ \\ \hline
\pentagraph{(1)--(5) (1)--(3) (5)--(3) }
\pentagraph{(4)--(2) (4)--(3) (2)--(3) (1)--(4) (2)--(1) (4)--(5) (2)--(5) }
& 10
& 0.3456
& = & 216/625 & $K_{3n} \cup A_{2n}$ \\ \hline
\pentagraph{(3)--(5) (3)--(1) }
\pentagraph{(4)--(2) (4)--(5) (2)--(5) (4)--(1) (2)--(1) (5)--(1) (4)--(3) (2)--(3) }
& 30
& 0.27886864
& $\geq$ & 0.2784 & $K_{n,n} \cup K_{2n,2n} \cup K_{2n,2n}$ \\
\tabucline[1pt]{-}
\pentagraph{(1)--(2) (1)--(5) (2)--(5) (1)--(4) (2)--(4) (5)--(4) }
\pentagraph{(3)--(2) (1)--(3) (3)--(4) (3)--(5) }
& 5
& 0.41666667
& = & 5/12 & $K_n \cup K_{\alpha n}$ \\ \hline
\pentagraph{(3)--(2) (1)--(3) (3)--(5) }
\pentagraph{(1)--(2) (1)--(5) (2)--(5) (1)--(4) (2)--(4) (5)--(4) (4)--(3) }
& 20
& 0.20833333
& = & 5/24 & $K_{n,n} \cup K_{\alpha n,\alpha n}$ \\ \hline
\pentagraph{(5)--(2) (2)--(4) (1)--(4) (5)--(1) }
\pentagraph{(1)--(2) (1)--(3) (2)--(3) (3)--(4) (3)--(5) (4)--(5) }
& 15
& 0.15625
& = & 5/32 & $K_{n,n} \cup K_{\alpha n,\alpha n}$ \\ \hline
\pentagraph{(1)--(5) (1)--(3) (5)--(3) (1)--(4) }
\pentagraph{(1)--(2) (2)--(3) (2)--(4) (3)--(4) (2)--(5) (4)--(5) }
& 60
& 0.1562855
& $\geq$ & 0.15625 & $(2 \times K_{n,n})^C \cup (2 \times K_{\alpha n,\alpha n})^C$ \\ \hline
\pentagraph{(1)--(2) (2)--(5) (1)--(4) (2)--(4) (5)--(4) }
\pentagraph{(3)--(2) (1)--(3) (3)--(4) (3)--(5) (1)--(5) }
& 30
& 0.24001625
& $\geq$ & 0.24 & $K_{n,n,n,n,n} \cup K_{\alpha n,\alpha n,\alpha n,\alpha n,\alpha n}$ \\
\tabucline[1pt]{-}
\pentagraph{(2)--(3) (5)--(4) (3)--(4) (1)--(5) (2)--(1) }
& 12
& 0.038462591
& $\geq$ & 1/26 $\approx$ 0.0384615 & $C_5^{\odot n}$ \\ \hline
\pentagraph{(1)--(2) (2)--(4) (4)--(5) (5)--(1) (2)--(3) }
\pentagraph{(1)--(2) (2)--(3) (2)--(4) (3)--(4) (1)--(5) }
& 60
& 0.25117348
& = & 15625/62208 & $G(n,n,5/6)$ \\ \hline
\pentagraph{(2)--(5) (1)--(4) (2)--(4) }
\pentagraph{(3)--(2) (5)--(3) (3)--(4) (5)--(4) (3)--(1) (2)--(1) (1)--(5) }
& 60
& 0.14470304
& $\geq$ & 0.133413966 & $G(n,0.3)$ \\ \hline
\pentagraph{(1)--(2) (2)--(3) (2)--(4) (3)--(4) (4)--(5) (1)--(5) }
\pentagraph{(1)--(2) (2)--(3) (3)--(4) (4)--(5) }
& 60
& 0.095475179
& $\geq$ & 1968/20995 $\approx$ 0.09373 & $(K_3 \otimes K_3 \otimes K_2)^{\odot n}$ \\ \hline
\pentagraph{(4)--(2) (4)--(3) (2)--(3) (2)--(1) (4)--(5) }
& 60
& 0.077634203
& $\geq$ & 813/11111 $\approx$ 0.07317 & $(K_3 \otimes K_3)^{\odot n}$ \\ \hline
\tabucline[1pt]{-}\tabucline[1pt]{-}
\end{tabu}
\caption{Inducibilities of graphs on 5 vertices}
\label{exoo5table}
\end{table}
{ \small
\begin{ntn*}
As usual, $K_n$ is a clique, $A_n$ is an anticlique, $C_n$ is an cycle
and $K_{n,n,...}$ is a complete multi-partite graph.
The graph products $\odot$ and $\otimes$ are as defined in Section~\ref{constsect}.
The disjoint union of $G$ and $H$ is denoted $G \cup H$,
and $k \times G$ is the union of $k$ copies of~$G$, i.e., $A_k \odot G$.
Let $\alpha = 2+\sqrt{3}$,
where ``$\alpha n$'' should be rounded to the closest integer.
The graph $G^C$ is the complement to $G$.
$G(n,p)$ is the Erd\H{o}s-R\'enyi random graph, with $n$ vertices and edge-probability $p$,
and $G(n,m,p)$ is, similarly, a random bipartite graph.
\end{ntn*}}

\end{document}